\theoremstyle{definition}
\newtheorem{defn}{Definition}[section]
\newtheorem{lemma}[defn]{Lemma}
\newtheorem{theorem}[defn]{Theorem}
\newtheorem{proposition}[defn]{Proposition}
\newtheorem{remark}[defn]{Remark}
\begin{document}

\title{Explicit $K_2$ of certain quotient rings over $\mathbb{Z}[G]$ for $G$ an elementary abelian $p$-group}
\author{
\textbf{Yakun Zhang}\\
\small \emph{School of Mathematics, Nanjing Audit University,}\\
\small \emph{Nanjing 211815, China}\\
\small \emph{E-mail: zhangyakun@nau.edu.cn}\\
}
\date{}
\maketitle

\noindent \textbf{Abstract.} Let $G$ be an elementary abelian $p$-group. In this paper, we calculate the $K_2$-groups of some quotient rings $\mathbb{Z}[G]/I$ for certain ideals $I \subseteq \mathbb{Z}[G]$ of finite $p$-power index. These results are established through the explicit computation of Dennis-Stein symbols. As an application, we provide a definitive characterization of the relative group $SK_1(\mathbb{Z}[G], p^k\mathbb{Z}[G])$ for any odd prime $p$ and $k \ge 1$.

\vspace{10pt}

\noindent \textbf{Mathematics Subject Classification (2020):} 19C20; 16S34; 19C99

\noindent \textbf{Keywords:} $K_2$-group, group rings, Dennis-Stein symbols, relative $SK_1$

\section{Introduction}
Let $G$ be a finite abelian group of order $p^n$, and let $\Gamma$ be the maximal $\mathbb{Z}$-order of $\mathbb{Z}[G]$. By Proposition 2.2 in \cite{alperin1985sk}, $\Gamma$ is isomorphic to a direct product of rings of algebraic integers, and $p^n\Gamma \subseteq \mathbb{Z}[G]\subseteq \Gamma$. In \cite{ChenHong2014}, the explicit description of $\Gamma$ (see \cite[Lemma 3.3]{ChenHong2014}) and a lower bound of the order of $K_2(\mathbb{Z}[G]/p^n\Gamma)$ are given, where $G$ is an elementary abelian $p$-group or a cyclic $p$-group. However, the explicit structure of $K_2(\mathbb{Z}[G]/p^n\Gamma)$ remains an open problem.

Since $K_2(\mathbb{Z}[G]/p^n\Gamma)$ sits in the Mayer-Vietoris sequence:
\begin{equation}\label{mayer0}
K_2(\mathbb{Z}[G]){\rightarrow}K_2(\mathbb{Z}[G]/p^{n}\Gamma)\oplus K_2(\Gamma) {\rightarrow}K_2(\Gamma/p^n\Gamma){\rightarrow} SK_1(\mathbb{Z}[G]){\rightarrow} 1,
\end{equation}
one motivation for studying this group is that $K_2(\mathbb{Z}[G]/p^n\Gamma)$ can be used to give a lower bound of the order of $K_2(\mathbb{Z}[G])$, which has important applications in both topology and algebra. For any $k\geq n$, the relationship between $\mathbb{Z}[G]$ and $\mathbb{Z}[G]/p^k\Gamma$ can be described by the following Cartesian square of rings:
\begin{equation*}
\begin{CD}
\mathbb{Z}[G] @>>>  \Gamma  \\
@VVV @VVV  \\
\mathbb{Z}[G]/p^k\Gamma @>>>  \Gamma/p^k\Gamma
\end{CD}
\end{equation*}
By \cite[Theorem 1.3]{alperin1985sk}, the square induces a Mayer-Vietoris sequence which is a generalisation of \eqref{mayer0}:
\begin{equation} \label{new-sequence}
K_2(\mathbb{Z}[G]){\rightarrow}K_2(\mathbb{Z}[G]/p^{k}\Gamma)\oplus K_2(\Gamma) {\rightarrow}K_2(\Gamma/p^k\Gamma){\rightarrow} SK_1(\mathbb{Z}[G]){\rightarrow} 1.
\end{equation}
However, due to the complex structure of $\Gamma$, it is hard to determine the explicit structure of $K_2(\mathbb{Z}[G]/p^k\Gamma)$ directly. In \cite{Yakun2019}, $K_2(\mathbb{Z}[G])$ was investigated through studying the inverse limit of \eqref{new-sequence} to avoid this difficult problem. In this paper, we will focus on computing the $K_2$ of some quotient rings of $\mathbb{Z}[G]$.

Recall that the pro-$p$-group $K_2^{c}(\widehat{\mathbb{Z}}_p[G])$ is defined to be the inverse limit of the groups $K_2(\mathbb{Z}[G]/I)$ taken over all ideals $I\subseteq \mathbb{Z}[G]$ of finite $p$-power index (see \cite{oliver1987k}), where $\widehat{\mathbb{Z}}_p$ denotes the $p$-adic integers. By \cite[Lemma 3.8]{oliver1987k}, when $G$ is an elementary abelian $p$-group of rank $n$, we have the following isomorphism:
\begin{equation*}
K_2^{c}(\widehat{\mathbb{Z}}_p[G])\cong K_2^{c}(\widehat{\mathbb{Z}}_p)\oplus HC_1(\widehat{\mathbb{Z}}_p[G]),
\end{equation*}
where $HC_1(\widehat{\mathbb{Z}}_p[G])$ denotes the first cyclic homology of $\widehat{\mathbb{Z}}_p[G]$ (see \cite[p.~532]{oliver1987k}). By definition, we have
$$HC_1(\widehat{\mathbb{Z}}_p[G]) \cong (G\otimes \widehat{\mathbb{Z}}_p[G])/\langle g\otimes \lambda g: g\in G, \lambda\in \widehat{\mathbb{Z}}_p \rangle \cong \prod_{g \in G} G/\langle g\rangle,$$
thus the order of $K_2^{c}(\widehat{\mathbb{Z}}_p[G])$ is closely related to the number of cyclic subgroups of $G$:
\begin{equation}\label{K2c-order}
|K_2^{c}(\widehat{\mathbb{Z}}_p[G])|= |K_2^{c}(\widehat{\mathbb{Z}}_p)| \cdot \prod_{g\in G}|G/\langle g\rangle|.
\end{equation}
On the other hand, by definition,
$$K_2^{c}(\widehat{\mathbb{Z}}_p[G])\cong \lim\limits_{\overset{\longleftarrow}{k}} K_2(\mathbb{Z}[G]/(p^{k}))\cong \lim\limits_{\overset{\longleftarrow}{k\geq n}} K_2(\mathbb{Z}[G]/p^{k}\Gamma),$$
in which $K_2(\mathbb{Z}[G]/(p^k))$ was firstly investigated in \cite{alperin1987sk, alperin1985sk} to compute $SK_1(\mathbb{Z}[G])$ and is more accessible to computation. The explicit structures of $K_2(\mathbb{Z}[G]/(p^k))$ and $K_2(\mathbb{Z}[G]/p^{k}\Gamma)$ are actually dependent on $k$, and \eqref{K2c-order} gives the upper bound of the order of them. Since surjective homomorphism between two commutative semi-local rings induces a surjection on $K_2$ (see Lemma \ref{K2-surjection}), for a sufficiently large number $k$, the two $K_2$-groups are stable, i.e., isomorphic to their common inverse limit. Then three questions naturally arise:

\begin{enumerate}[(i)]
 \item How can one identify the minimal generators for $K_2(\mathbb{Z}[G]/p^k\Gamma)$ and $K_2(\mathbb{Z}[G]/p^k)$?
 \item Under what conditions do $K_2(\mathbb{Z}[G]/p^k\Gamma)$ and $K_2(\mathbb{Z}[G]/p^k)$ satisfy stability?
 \item Under what conditions are $K_2(\mathbb{Z}[G]/p^k\Gamma)$ and $K_2(\mathbb{Z}[G]/p^k)$ isomorphic to each other?
\end{enumerate}

For simplicity, we primarily consider the case $G$ an elementary abelian $p$-group of rank $n$. For $k=n=1$, the solutions are self-evident, as the two $K_2$-groups are inherently trivial (see \cite[p.~1579]{ChenHong2014}). To give a complete answer to the above questions, the article is organized as follows: we first give the explicit structure of $K_2(\mathbb{F}_p[G])$ in terms of Dennis-Stein symbols or Steinberg symbols (see Lemma \ref{generators}), then apply this result to give the explicit structure of $K_2(\mathbb{Z}[G]/(p^k))$ (see Theorem \ref{nontrivial2}) and $K_2(\mathbb{Z}[G]/p^k\Gamma)$ (see Theorem \ref{main-thm}). Our findings show that $K_2(\mathbb{Z}[G]/(p^k))$ exhibits stability for $k \geq 2$, and $K_2(\mathbb{Z}[G]/p^k\Gamma)$ maintains stability for $k \geq n+1$. Given that the two $K_2$-groups converge to the same inverse limit, and considering the surjective mapping from $K_2(\mathbb{Z}[G]/(p^k))$ to $K_2(\mathbb{Z}[G]/p^k\Gamma)$, it follows that they are isomorphic for $k \geq n+1$. As an application, for $p$ an odd prime, we compute $SK_1(\mathbb{Z}[G], p^k\mathbb{Z}[G])$ for all $k\geq 1$ (see Theorem \ref{relative-sk1}).
 
\noindent\textbf{Notations}

Let $R$ be a ring with identity. When $a, b \in R$ are commuting elements such that $1-ab \in R^*$, the Dennis-Stein symbol $\langle a,b \rangle$ is defined in $K_2(R)$. In this paper, we adopt the ``symmetric'' Dennis-Stein symbol as defined in \cite{stein1981maps}; thus, our $\langle a,b \rangle$ corresponds to $\langle -a, b \rangle$ in \cite{alperin1987sk, silvester1981introduction}.

The relative $K_2$-group $K_2(R,I)$ is generated by the Dennis-Stein symbols $\langle a,b \rangle$ where at least one of $a$ or $b$ lies in an ideal $I \subseteq R$. These symbols satisfy the following relations:
\begin{align*}
  (\text{DS1}) \quad &\langle a, b\rangle \langle a, c\rangle = \langle a, b+c-abc\rangle;\\
  (\text{DS2}) \quad &\langle a, b\rangle \langle b, a\rangle = 1; \\
  (\text{DS3}) \quad &\langle a, bc\rangle = \langle ab, c\rangle \langle ac, b\rangle.
\end{align*}
If $a \in R^*$, then $\langle a, b\rangle = \{a, 1-ab\}$.

Let $C_n$ denote a multiplicative cyclic group of order $n$. Unless otherwise specified, $G$ is an elementary abelian $p$-group of rank $n$, denoted by $G = \prod_{i=1}^{n} G_{i}$, where $G_i = \langle \sigma_i \mid \sigma_i^p=1 \rangle$. Let $\Gamma$ and $\Gamma_{i}$ denote the maximal $\mathbb{Z}$-order of $\mathbb{Z}[G]$ and $\mathbb{Z}[G_i]$, respectively. Let $x_i = \sigma_i - 1$. For an ideal $I \subseteq \mathbb{Z}[G]$ of finite $p$-power index, we define the following sets to identify the minimal generators of $K_2(\mathbb{Z}[G]/I)$ (with some abuse of notation in the subsequent computations):
\begin{align*}
T &= \{ \langle x_i, \prod_{j=1}^{n}x_j^{\lambda_j} \rangle \mid 1 \leq i \leq n, \, 0 \leq \lambda_j \leq p-1, \, \sum \lambda_j > 0 \}, \\
T_1 &= \bigcup_{i=1}^{n} \{ \langle x_i, \prod_{j=i}^{n}x_j^{\lambda_j} \rangle \mid 0 \leq \lambda_i \leq p-2, \, 0 \leq \lambda_{i+1}, \dots, \lambda_n \leq p-1, \, \sum \lambda_j > 0 \}, \\
T_2 &= \{ \langle x_i, x_i^{p-1} \rangle \mid 1 \leq i \leq n \}, \quad T_3 = \{ \langle x_i, \prod_{j=1}^{n} x_j^{p-1} \rangle \mid 1 \leq i \leq n \}.
\end{align*}
Finally, for any prime $p$, we define the indicator function
\[
\tau(p) = \begin{cases} 
1, & \text{if } p=2, \\
0, & \text{if } p \neq 2.
\end{cases}
\]

\section{The explicit structure of $K_2(\mathbb{Z}[G]/(p^k))$ for $k \geq 2$}

\begin{lemma}\cite[Proposition 1.1]{alperin1985sk} \label{K2-surjection}
Let $f: A \to B$ be a surjective homomorphism between two commutative semi-local rings. Then $f$ induces a surjection $K_2(A) \to K_2(B)$.
\end{lemma}

\begin{lemma} \cite[p.~255]{alperin1987sk} \label{scholium}
Let $\alpha_0, \dots, \alpha_l$ be elements of a ring such that $1 - \alpha_0 \cdots \alpha_l$ is invertible. Let $\hat{\alpha}_i = \alpha_0 \cdots \alpha_{i-1} \alpha_{i+1} \cdots \alpha_l$. Then
\begin{equation*}
    1 = \langle 1, \alpha_0 \cdots \alpha_l \rangle = \prod_{i=0}^l \langle \alpha_i, \hat{\alpha}_i \rangle.
\end{equation*}
\end{lemma}

The following result is a special case of the results established in \cite{gao2015explicit, zhang2019some}. Nevertheless, we provide an alternative line of argument here.

\begin{lemma}\label{generators}
$K_2(\mathbb{F}_p[G])$ is an elementary abelian $p$-group of rank $(n-1)(p^n-1)$. Furthermore, the set $T \setminus (T_1 \cup T_2)$ forms a basis for $K_2(\mathbb{F}_p[G])$.
\end{lemma}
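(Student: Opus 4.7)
The plan has three stages: (I) generation, (II) reduction via the Scholium, and (III) a matching lower bound via a trace map; Stage (III) will be the main obstacle. For (I), since $R:=\mathbb{F}_p[G]\cong\mathbb{F}_p[x_1,\ldots,x_n]/(x_1^p,\ldots,x_n^p)$ is local with nilpotent maximal ideal $\mathfrak{m}=(x_1,\ldots,x_n)$ and residue field $\mathbb{F}_p$, and $K_2(\mathbb{F}_p)=0$, one has $K_2(R)=K_2(R,\mathfrak{m})$, generated by Dennis--Stein symbols $\langle a,b\rangle$ with $a\in\mathfrak{m}$. Iterated use of (DS1)--(DS3) together with $x_j^p=0$ reduces each such generator to a product of symbols $\langle x_i,\prod_j x_j^{\lambda_j}\rangle$, so $T$ is a generating set.

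For (II), first $\langle x_i,1\rangle=\langle 1,x_i\rangle^{-1}=\{1,1-x_i\}^{-1}=1$ by (DS1) and the triviality of Steinberg symbols $\{1,\,\cdot\,\}$. Iterating (DS2), one obtains $\langle x_i,1\rangle^{p}=\langle x_i,b_p\rangle$ where the recursion $b_{k+1}=b_k+1-x_ib_k$ with $b_1=1$ yields $b_p=\sum_{j=0}^{p-1}(1-x_i)^j$, which collapses to $x_i^{p-1}$ in characteristic $p$ since all middle binomial coefficients vanish modulo $p$; hence $\langle x_i,x_i^{p-1}\rangle=1$, disposing of $T_2$. For $\langle x_i,m\rangle\in T_1$ with $m=\prod_{j\ge i}x_j^{\lambda_j}$ and $\lambda_i\le p-2$, apply the Scholium (Lemma \ref{scholium}) to the sequence consisting of $1+\lambda_i$ copies of $x_i$ together with $\lambda_j$ copies of each $x_j$ for $j>i$: the resulting identity
\[
\langle x_i,m\rangle^{1+\lambda_i}\cdot\prod_{j>i}\bigl\langle x_j,\;x_i^{1+\lambda_i}x_j^{\lambda_j-1}\textstyle\prod_{k>i,\,k\ne j}x_k^{\lambda_k}\bigr\rangle^{\lambda_j}=1
\]
is solvable for $\langle x_i,m\rangle$ because $1+\lambda_i\in\{1,\ldots,p-1\}$ is a unit modulo $p$; each symbol on the right has first entry $x_j$ (with $j>i$) and second entry containing $x_i$ with positive exponent, hence lies in $T\setminus(T_1\cup T_2)$. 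Combined with the count $|T\setminus(T_1\cup T_2)|=n(p^n-1)-(p^n-1)=(n-1)(p^n-1)$, this gives $\dim_{\mathbb{F}_p}K_2(R)\le(n-1)(p^n-1)$.

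For (III), one constructs enough $\mathbb{F}_p$-valued characters of $K_2(R)$ to separate the proposed basis: a natural source is the composition with the trace-type map $\langle a,b\rangle\mapsto b\,da$ into an appropriate quotient of the K\"ahler differentials $\Omega^1_{R/\mathbb{F}_p}$, or alternatively a careful comparison with Oliver's $K_2^c(\widehat{\mathbb{Z}}_p[G])$ from \eqref{K2c-order}, noting that the $n$-dimensional $g=e$ summand of $HC_1(\widehat{\mathbb{Z}}_p[G])$ dies on passing to $\mathbb{F}_p[G]$ while the remaining $(p^n-1)(n-1)$-dimensional part carries the image of $T\setminus(T_1\cup T_2)$. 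Pinning down this exact rank---rather than merely an upper bound---is the main technical obstacle; Stages (I) and (II) are routine, if somewhat lengthy, Dennis--Stein bookkeeping.
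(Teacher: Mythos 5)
Your Stages (I) and (II) reproduce, in more explicit form, exactly the upper-bound half of the paper's argument: generation by $T$ plus the Scholium-type relations cutting the count down to $(n-1)(p^n-1)$. (Your $b_p=\sum_{j=0}^{p-1}(1-x_i)^j=x_i^{p-1}$ computation for killing $T_2$, and the weighted Scholium identity for eliminating $T_1$, are both correct.) But the lemma asserts more than an upper bound, and your Stage (III) is where the proof actually fails to exist: you name two possible routes to the lower bound and then concede that "pinning down this exact rank \ldots is the main technical obstacle." That obstacle is the content of the lemma. Of your two suggestions, the comparison with $K_2^c(\widehat{\mathbb{Z}}_p[G])$ cannot work as stated: the surjection $K_2^c(\widehat{\mathbb{Z}}_p[G])\twoheadrightarrow K_2(\mathbb{F}_p[G])$ only bounds the rank from \emph{above}, and you have no injectivity statement to convert it into a lower bound. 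The differential/trace idea is the right direction but is precisely the nontrivial work. The paper sidesteps all of this by citing Propositions~3 and~5 of \cite{dennis1976lower}, which already give that $K_2(\mathbb{F}_p[G])$ is elementary abelian of rank exactly $(n-1)(p^n-1)$; the symbol manipulations then only need to exhibit a generating set of that cardinality.

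A second, related gap: your Stage (II) elimination of $T_1$ "solves for" $\langle x_i,m\rangle$ by inverting the exponent $1+\lambda_i$ modulo $p$, which presupposes that $K_2(\mathbb{F}_p[G])$ has exponent $p$. That is not automatic for relative $K_2$ of a nilpotent ideal in characteristic $p$ (compare $K_2(\mathbb{F}_p[x]/(x^N))$ for large $N$, which has higher $p$-torsion); it is again part of what \cite{dennis1976lower} supplies. So as written the proposal establishes only that $T\setminus(T_1\cup T_2)$ generates a quotient of $K_2(\mathbb{F}_p[G])$ by its $p$-torsion ambiguities, not that it is a basis; you must either import the exponent and rank from the literature, as the paper does, or genuinely carry out the character/differential construction you sketch.
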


\begin{proof}
According to Proposition 3 and Proposition 5 in \cite{dennis1976lower}, $K_2(\mathbb{F}_p[G])$ is an elementary abelian $p$-group, and the $p$-rank of $K_2(\mathbb{F}_p[G])$ is exactly $(n-1)(p^n-1)$. Set
$$S =\{\prod_{j=1}^{n}x_{j}^{\lambda_j} \mid 0 \leq \lambda_{j} \leq p-1, \sum_{j=1}^{n} \lambda_j > 0 \}, \quad
S_i =\{x_{i}^{\lambda_i} \mid 0 < \lambda_{i} \leq p-1\}.$$
Then by Lemma \ref{scholium}, for $s \in S_i,$ $\langle x_i, s \rangle$ is trivial. Note that $\langle -p,-p \rangle=0$ and $x_j^{p} = 0$. Moreover, Proposition 1.1 in \cite{alperin1987sk} also holds for $k=1$ with slight modifications. Thus, $K_2(\mathbb{F}_{p}[G])$ is generated by:
\begin{enumerate}[(i)]
  \item all Dennis-Stein symbols $\langle x_i, s_i \rangle$ with $s_i \in S \setminus S_i, 1 \leq i \leq n$, or equivalently, by
  \item all Steinberg symbols $\{1+x_i, 1+s_i\}$ with $s_i \in S \setminus S_i, 1 \leq i \leq n.$
\end{enumerate}
The number of Dennis-Stein symbols listed here is $n(p^n-p)$. For a fixed number $t$, $2 \leq t \leq n$, write $\widetilde{x}_{l_j} = (\prod_{j=1}^{t} x_{l_j}^{\lambda_{l_j}})/x_{l_j}$, where $1 \leq j \leq t$, $\widetilde{x}_{l_j} \neq 1$, $1 \leq \lambda_{l_j} \leq p-1$, $1 \leq l_j \leq n$, and the $l_j$'s are arranged by size of subscript. The number of generators as $\langle x_{l_1}, \widetilde{x}_{l_1} \rangle$ listed above is $\binom{n}{t}(p-1)^{t}$.

By Lemma \ref{scholium}, the symbols $\langle x_{l_j}, \widetilde{x}_{l_j} \rangle$ ($1 \leq j \leq t$) are linearly dependent:
$$\prod_{j=1}^{t}\langle x_{l_j}, \widetilde{x}_{l_j} \rangle^{\lambda_j}=1,$$
thus $K_2(\mathbb{F}_p[G])$ has $p$-rank at most
$$n(p^n-p) - \sum_{i=2}^{n}\binom{n}{i}(p-1)^{i} = (n-1)(p^n-1),$$
which coincides with the previous estimate. It follows that the set $T \setminus (T_1 \cup T_2)$ forms a basis for $K_2(\mathbb{F}_p[G])$.
\end{proof}

\begin{lemma} \label{Cp-fir}
Let $\sigma$ be a generator of $C_p$, where $k \geq 2$. For $p=2$, $K_2(\mathbb{Z}[C_p]/(p^k))$ is an elementary abelian 2-group of rank 2 generated by $\langle \sigma-1,\sigma-1\rangle$ and $\langle -2,-2\rangle$. For an odd prime $p$, $K_2(\mathbb{Z}[C_p]/(p^k))$ is a cyclic group of order $p$ generated by $\langle \sigma-1,(\sigma-1)^{p-1}\rangle$.
\end{lemma}

\begin{proof}
For $p=2$, $SK_1(\mathbb{Z}[C_2],(2^k))$ is trivial as stated in \cite[Theorem 1.10(c)]{alperin1987sk}. Moreover, $K_2(\mathbb{Z}[C_2]/(2^k)) \cong K_2(\mathbb{Z}[C_2]/(4))$ is an elementary abelian 2-group of rank 2, which is in accordance with \cite[Theorem 2.1]{smallK2}. Then by considering the algebraic $K$-theory exact sequence associated to the pair $(\mathbb{Z}[C_2],2^k\mathbb{Z}[C_2])$, $K_2(\mathbb{Z}[C_2])$ maps isomorphically onto $K_2(\mathbb{Z}[C_2]/(2^k))$. It is well known that $K_2(\mathbb{Z}[C_2])$ is generated by $\{\sigma,\sigma\}$ and $\{-1,-1\}$ as mentioned in \cite{stein1980excision}. Note that $(\sigma-1)^2 = 2(\sigma-1)$, and since $K_2(\mathbb{Z}[C_2]/(4))$ has exponent 2, then by Proposition 1.1 in \cite{alperin1987sk}, $K_2(\mathbb{Z}[C_2]/(4))$ is generated by $\langle \sigma-1,\sigma-1\rangle$ and $\langle -2,-2\rangle$.

For $p$ an odd prime, by \eqref{K2c-order}, the inverse limit of $K_2(\mathbb{Z}[C_p]/(p^k))$ is a cyclic group of order $p$. To prove our result, it suffices to show that $K_2(\mathbb{Z}[C_p]/(p^k))$ is a cyclic $p$-group of order $p$ generated by $\langle \sigma-1,(\sigma-1)^{p-1}\rangle$. Let $P$ be the maximal ideal of $A=\mathbb{Z}[\zeta_p]$ generated by $\pi = \zeta_p-1$. Then by algebraic number theory, $pA={\pi}^{p-1}A$. As a corollary of \cite[Theorem 4.3]{dennis1975k}, for any $k\geq 2$, $K_2(A/(p^k))$ is isomorphic to $K_2(A/P^{p})$, which is a cyclic group of order $p$. Moreover, by Corollary 3.3 and the exact sequence (3.1) in \cite[p.~209]{dennis1975k}, $K_2(A/P^{p})$ is isomorphic to $K_2(A/P^{p}, P^{p-1}/P^{p})$, which is generated by $\{1+\pi,1+{\pi}^{p-1}\}$ (see \cite[Lemma 3.2]{dennis1975k}). Note that ${\pi}^p=0$ holds in $A/P^{p}$. Then,
$$\langle \pi,{\pi}^{p-1}\rangle = \langle \pi,{\pi}^{p-1}\rangle \langle 1,{\pi}^{p-1}\rangle = \langle 1+\pi,{\pi}^{p-1}\rangle = \langle 1+\pi,-{\pi}^{p-1}\rangle^{-1} = \{1+\pi,1+{\pi}^{p-1}\}^{-1}, $$
thus $K_2(A/P^{p}, P^{p-1}/P^{p})$ is also generated by $\langle \pi,{\pi}^{p-1}\rangle$. Consequently, for any $k\geq 2$, $K_2(A/(p^k))$ is generated by $\langle \pi,{\pi}^{p-1}\rangle$. According to \cite[Theorem 2.10]{alperin1985sk}, the homomorphism $$\chi: K_2(\mathbb{Z}[C_p]/(p^k)) \rightarrow K_2(A/(p^k))$$ induced by the canonical map of rings is surjective. Obviously $\langle \sigma-1,(\sigma-1)^{p-1}\rangle$ maps  onto $\langle \pi,{\pi}^{p-1}\rangle$, and hence our result follows.
\end{proof}

\begin{remark}
There exists another approach to prove the triviality of $\{1+\pi, 1+{\pi}^{p-1}\}$. For more details, see \cite[pp.~269-270]{roberts2006k2}. From the proof of Lemma \ref{Cp-fir}, we conclude that $\langle \pi,{\pi}^{p-1}\rangle = \{1+{\pi}^{p-1}, 1+\pi\}$. Moreover, in $K_2(\mathbb{Z}[C_p]/(p^k))$ we have $\langle \sigma-1,(\sigma-1)^{p-1}\rangle = \{1+(\sigma-1)^{p-1}, \sigma\}$.
\end{remark}

\begin{theorem} \label{nontrivial2}
For any $k \geq 2$, $K_2(\mathbb{Z}[G]/(p^k))$ is an elementary abelian $p$-group of rank $(n-1)p^n+1+\tau(p)$. The set $T \setminus T_1$ together with $\langle -p, -p \rangle$ (if $p=2$) forms a basis for $K_2(\mathbb{Z}[G]/(p^k))$.
\end{theorem}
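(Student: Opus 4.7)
The plan is to match generators against the upper bound coming from the continuous $K_2$. A direct count gives $|T| = n(p^n - 1)$ and $|T_1| = \sum_{i=1}^n ((p-1)p^{n-i} - 1) = p^n - 1 - n$, so $|T \setminus T_1| = (n-1)p^n + 1$, making the proposed basis (with $\langle -p, -p\rangle$ appended when $p = 2$) of cardinality $(n-1)p^n + 1 + \tau(p)$. For the upper bound I would invoke the continuous $K_2$: by \cite[Lemma 3.8]{oliver1987k} and \eqref{K2c-order}, the inverse limit $K_2^{c}(\widehat{\mathbb{Z}}_p[G]) \cong K_2^{c}(\widehat{\mathbb{Z}}_p) \oplus HC_1(\widehat{\mathbb{Z}}_p[G])$ is elementary abelian of rank precisely $(n-1)p^n + 1 + \tau(p)$ (the $HC_1$ factor contributes $n + (n-1)(p^n-1)$ and $|K_2(\mathbb{Z}_p)| = p^{\tau(p)}$). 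Since each transition map $K_2(\mathbb{Z}[G]/(p^{k+1})) \twoheadrightarrow K_2(\mathbb{Z}[G]/(p^k))$ is a surjection of finite groups (Lemma \ref{K2-surjction}), the Mittag--Leffler projection $K_2^{c}(\widehat{\mathbb{Z}}_p[G]) \twoheadrightarrow K_2(\mathbb{Z}[G]/(p^k))$ is surjective, so $K_2(\mathbb{Z}[G]/(p^k))$ is automatically an elementary abelian $p$-group of rank at most $(n-1)p^n + 1 + \tau(p)$.

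For the matching generation step I would use the right-exact sequence
\[
K_2\bigl(\mathbb{Z}[G]/(p^k),\,(p)/(p^k)\bigr) \to K_2(\mathbb{Z}[G]/(p^k)) \to K_2(\mathbb{F}_p[G]) \to 1,
\]
whose right-hand surjectivity is Lemma \ref{K2-surjction}. By Lemma \ref{generators}, $K_2(\mathbb{F}_p[G])$ is generated by the classes of $T \setminus (T_1 \cup T_2)$, which lift tautologically to the same Dennis--Stein symbols in $K_2(\mathbb{Z}[G]/(p^k))$. For the relative piece I would apply \cite[Proposition 1.1]{alperin1987sk} to present it by Dennis--Stein symbols of prescribed shape, and then use the canonical ring surjections $\pi_i \colon \mathbb{Z}[G]/(p^k) \twoheadrightarrow \mathbb{Z}[C_p]/(p^k)$ (sending $\sigma_j \mapsto 1$ for $j \neq i$) together with Lemma \ref{C_p fir} to show that the relative contribution is generated, modulo the absolute part, by $T_2 = \{\langle x_i, x_i^{p-1}\rangle : 1 \leq i \leq n\}$ together with $\langle -p, -p\rangle$ when $p = 2$. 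Finally, to eliminate $T_1$ from this generating set, for each increasing sequence $l_1 < \cdots < l_t$ in $\{1, \ldots, n\}$ with $t \geq 2$ and each $(\mu_{l_1}, \ldots, \mu_{l_t}) \in \{1, \ldots, p-1\}^t$, Lemma \ref{scholium} yields $\prod_{j=1}^t \langle x_{l_j}, \widetilde x_{l_j}\rangle^{\mu_{l_j}} = 1$ with $\widetilde x_{l_j} = x_{l_j}^{\mu_{l_j}-1}\prod_{i \neq j} x_{l_i}^{\mu_{l_i}}$; since the group has exponent $p$ and $\mu_{l_1}$ is a unit modulo $p$, one solves for the smallest-index symbol (which is the generic element of $T_1$) as a product of symbols $\langle x_{l_j}, \widetilde x_{l_j}\rangle$ with $j \geq 2$, each of which involves $x_{l_1}$ in its product argument and thus lies in $T \setminus T_1$.

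Combining these steps produces a generating set of exactly $(n-1)p^n + 1 + \tau(p)$ elements inside an elementary abelian $p$-group of rank at most $(n-1)p^n + 1 + \tau(p)$, so it must be a basis and the theorem follows. The main obstacle is controlling the relative $K_2$ in the second step: one must carefully invoke the Alperin--Dennis generation theorem and verify that the $T_2$ symbols, together with $\langle -p, -p\rangle$ when $p = 2$, genuinely exhaust the image of $K_2\bigl(\mathbb{Z}[G]/(p^k),\,(p)/(p^k)\bigr)$ in $K_2(\mathbb{Z}[G]/(p^k))$, rather than just a subgroup of it, via the projections $\pi_i$ and Lemma \ref{C_p fir}. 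The remaining manipulations are routine consequences of the Scholium lemma and the explicit description of $K_2^{c}(\widehat{\mathbb{Z}}_p[G])$.
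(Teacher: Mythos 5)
Your ingredients are the right ones and largely coincide with the paper's (the upper bound from $K_2^{c}(\widehat{\mathbb{Z}}_p[G])$ via \eqref{K2c-order}, the surjection onto $K_2(\mathbb{F}_p[G])$ with Lemma \ref{generators}, the projections $\pi_i$ with Lemma \ref{C_p fir}, and the relations from Lemma \ref{scholium}), but the way you assemble them leaves a genuine gap. Your closing inference --- ``a generating set of exactly $N=(n-1)p^n+1+\tau(p)$ elements inside an elementary abelian $p$-group of rank at most $N$ must be a basis'' --- is invalid: a generating set of size $N$ only re-proves rank $\leq N$, and without a matching lower bound on the rank you cannot conclude that the group has rank $N$ at any \emph{fixed} $k$. (Surjectivity of the transition maps only forces the rank of $K_2(\mathbb{Z}[G]/(p^k))$ to reach the rank of the inverse limit for $k$ sufficiently large, not already at $k=2$.) Moreover, the step your generation argument leans on --- that $T_2$ together with $\langle -p,-p\rangle$ \emph{exhausts} the image of $K_2\bigl(\mathbb{Z}[G]/(p^k),(p)/(p^k)\bigr)$ --- is precisely the hard part you leave unverified, and it turns out not to be needed at all.

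The paper runs the count in the opposite, easier direction: it establishes the lower bound by exhibiting $N$ \emph{independent} classes. Independence of $T\backslash(T_1\cup T_2)$ is free from Lemma \ref{generators}, since these symbols map to a basis of the quotient $K_2(\mathbb{F}_p[G])$. Independence of $T_2$ (and of $\langle -p,-p\rangle$ when $p=2$) inside the kernel of $K_2(\mathbb{Z}[G]/(p^k))\rightarrow K_2(\mathbb{F}_p[G])$ follows from your own maps $\pi_i$: since $K_2(\mathbb{F}_p[C_p])\cong K_2(\mathbb{F}_p[x]/(x^p))$ is trivial, the symbols $\langle x_i,x_i^{p-1}\rangle$ do lie in that kernel, and $\pi_i$ kills $\langle x_j,x_j^{p-1}\rangle$ for $j\neq i$ while sending $\langle x_i,x_i^{p-1}\rangle$ to the generator of $K_2(\mathbb{Z}[C_p]/(p^k))$ by Lemma \ref{C_p fir}. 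Classes in the kernel together with classes mapping to a basis of the quotient are jointly independent, so the rank is at least $(n-1)(p^n-1)+n+\tau(p)=N$; combined with your upper bound this forces equality, and both the exhaustion you flagged as the main obstacle and the basis statement drop out for free. Your elimination of $T_1$ via Lemma \ref{scholium} is correct but likewise becomes superfluous once the argument is organized this way.
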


\begin{proof}
By \eqref{K2c-order}, the inverse limit of $K_2(\mathbb{Z}[G]/(p^k))$ has $p$-rank $(n-1)p^n+1+\tau(p)$. Thus, the $p$-rank of $K_2(\mathbb{Z}[G]/(p^k))$ is at most $(n-1)p^n+1+\tau(p)$.

On the other hand, let $G_i = \langle \sigma_i \rangle$ for $1 \leq i \leq n$. Then $\mathbb{F}_p[G_i] \cong \mathbb{F}_p[x_i]/({x_i}^p)$. By Corollary 4.4(a) in \cite{dennis1975k}, $K_2(\mathbb{F}_p[x_i]/({x_i}^p))$ is trivial. According to Lemma \ref{K2-surjection}, for any $k \geq 2$, $K_2(\mathbb{Z}[G]/(p^k))$ maps onto the elementary abelian $p$-group $K_2(\mathbb{F}_p[G])$, which has rank $(n-1)(p^n-1)$ (see Lemma \ref{generators}). 

Since $K_2(\mathbb{F}_p[G_i])$ is trivial, the kernel contains $n+\tau(p)$ additional elements: $\langle -p, -p \rangle$ (if $p=2$) and $\langle x_i, x_i^{p-1} \rangle$ for $1 \leq i \leq n$, which are clearly linearly independent. Following the proof of Lemma \ref{Cp-fir}, $\langle x_i, x_i^{p-1} \rangle$ maps  onto the generator of $K_2(\mathbb{Z}[G_i]/(p^k))$. It follows that the $p$-rank of $K_2(\mathbb{Z}[G]/(p^k))$ is at least $(n-1)p^n+1+\tau(p)$, which completes the proof.
\end{proof}

\section{The explicit structure of $K_2(\mathbb{Z}[G]/p^k\Gamma)$ for $k\geq n$}
According to \cite[Lemma 3.3]{ChenHong2014}, $\Gamma =\sum_{H\leq G} \mathbb{Z}[G]e_{H},$ where $e_{H}= \sum_{h\in H}h/|H|$ is an idempotent element. Let $J=|G|\Gamma$, $\widetilde{G}=\sum_{g\in G}g$, and let $I$ be the ideal of $\mathbb{Z}[G]$ generated by $\{|G|e_{H}|H\leq G, H\neq G\}$. Then $J=I+(\widetilde{G})\mathbb{Z}$. Note that $I\subseteq p\mathbb{Z}[G]$, which implies $p(\widetilde{G})\mathbb{Z} \subseteq J\cap p\mathbb{Z}[G]=I$. It follows that $pJ\subseteq I \subseteq J$, and there is a Cartesian square
\begin{equation}\label{Cartesian}
  \begin{CD}
\mathbb{Z}[G]/(p\mathbb{Z}[G] \cap J) @>>>  \mathbb{Z}[G]/J  \\
@VVV @VVV  \\
\mathbb{Z}[G]/p\mathbb{Z}[G]  @>>>  \mathbb{Z}[G]/(p\mathbb{Z}[G]+J)
\end{CD}
\quad
\mbox{i.e., }
\quad
\begin{CD}
\mathbb{Z}[G]/I @>>>  \mathbb{Z}[G]/J  \\
@VVV @VVV  \\
\mathbb{F}_p[G] @>>>  \mathbb{F}_p[G]/(\widetilde{G})
\end{CD}
\end{equation}
In \cite{ChenHong2014}, the square \eqref{Cartesian} was employed to provide a lower bound for the order of $K_2(\mathbb{Z}[G]/|G|\Gamma)$. To prove Theorem \ref{main-thm}, several technical lemmas are needed. By slightly modifying the proof of Lemma \ref{Cp-fir}, we obtain the following:

\begin{lemma} \label{Cp-sec}
Let $\sigma$ be a generator of $C_p$, and let $\Gamma_1$ be the maximal $\mathbb{Z}$-order of $\mathbb{Z}[C_p]$. For $k\geq 2$, if $p=2$, $K_2(\mathbb{Z}[C_p]/p^k\Gamma_1)$ is an elementary abelian 2-group of rank 2 generated by $\langle \sigma-1,\sigma-1\rangle$ and $\langle -2,-2\rangle$. If $p$ is an odd prime, $K_2(\mathbb{Z}[C_p]/p^k\Gamma_1)$ is a cyclic group of order $p$ generated by $\langle \sigma-1,(\sigma-1)^{p-1}\rangle$.
\end{lemma}

\begin{lemma} \label{result1}
Let $n\geq 2$. Then $K_2(\mathbb{F}_p[G]/(\widetilde{G}))$ is an elementary abelian $p$-group of rank $(n-1)(p^n-2)-1$, and the set $T\backslash (T_1\cup T_2\cup T_3)$ forms a basis for $K_2(\mathbb{F}_p[G]/(\widetilde{G}))$.
\end{lemma}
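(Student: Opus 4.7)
The plan is to identify the kernel of the natural surjection $K_2(\mathbb{F}_p[G]) \twoheadrightarrow K_2(\mathbb{F}_p[G]/(\widetilde{G}))$ (surjective by Lemma \ref{K2-surjction}) with exactly the $\mathbb{F}_p$-span of $T_3$; combined with Lemma \ref{generators} this will yield both the rank $(n-1)(p^n-1)-n=(n-1)(p^n-2)-1$ and the basis $T\setminus(T_1\cup T_2\cup T_3)$. The structural input I would rely on throughout is the identity $\widetilde{G}=\prod_{i=1}^{n} x_i^{p-1}$ in $\mathbb{F}_p[G]$ together with its immediate consequences: $\widetilde{G}^2=0$, $x_i\widetilde{G}=0$ for every $i$ (so the augmentation ideal annihilates $(\widetilde{G})$), and $\widetilde{G}\cdot r=\epsilon(r)\widetilde{G}$ where $\epsilon$ is the augmentation.

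The inclusion $\langle T_3\rangle\subseteq\ker$ is immediate, since $\prod_j x_j^{p-1}=0$ in the quotient forces $\langle x_i,\prod_j x_j^{p-1}\rangle=\langle x_i,0\rangle=1$. For the reverse inclusion I would use the excision exact sequence
$$K_2(\mathbb{F}_p[G],(\widetilde{G}))\xrightarrow{\iota}K_2(\mathbb{F}_p[G])\to K_2(\mathbb{F}_p[G]/(\widetilde{G}))\to K_1(\mathbb{F}_p[G],(\widetilde{G}))\to K_1(\mathbb{F}_p[G]).$$
Because $\widetilde{G}^2=0$, the relative $K_1$-group is $1+(\widetilde{G})\cong\mathbb{F}_p$ and its inclusion into $\mathbb{F}_p[G]^*$ is injective; hence the boundary map vanishes and $\ker=\operatorname{im}(\iota)$, reducing the task to showing $\operatorname{im}(\iota)\subseteq\langle T_3\rangle$.

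The image $\operatorname{im}(\iota)$ is generated by Dennis-Stein symbols $\langle a,b\rangle$ with one argument in $(\widetilde{G})$. Using DS1 and the equality $\langle a,b\rangle\langle a,c\rangle=\langle a,b+c\rangle$ that DS2 supplies whenever $b,c\in(\widetilde{G})$ (since then $abc\in(\widetilde{G})^2=0$), I can reduce to symbols $\langle a,\widetilde{G}\rangle$ with $a\in\mathbb{F}_p[G]$. A parallel DS2/DS1 manipulation shows $\langle a+a',\widetilde{G}\rangle=\langle a,\widetilde{G}\rangle\langle a',\widetilde{G}\rangle$ whenever $\epsilon(aa')=0$, because the relevant mixed term $\widetilde{G}\cdot aa'$ vanishes. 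For $a$ in the augmentation ideal, writing $a=\sum_i x_ia_i$ and applying DS3 in the form $\langle x_i,a_i\widetilde{G}\rangle=\langle x_ia_i,\widetilde{G}\rangle\langle x_i\widetilde{G},a_i\rangle=\langle x_ia_i,\widetilde{G}\rangle$ (since $x_i\widetilde{G}=0$), together with $a_i\widetilde{G}=\epsilon(a_i)\widetilde{G}$, gives $\langle a,\widetilde{G}\rangle=\prod_i\langle x_i,\widetilde{G}\rangle^{\epsilon(a_i)}\in\langle T_3\rangle$; the exponents are well-defined since $a\equiv\sum_i\epsilon(a_i)x_i$ modulo $(x_1,\ldots,x_n)^2$.

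The remaining case is $a$ a unit, which I would write as $a=\lambda+a_0$ with $\lambda\in\mathbb{F}_p^{*}$ and $a_0$ in the augmentation ideal. Additivity applies (since $\epsilon(\lambda a_0)=0$), so $\langle a,\widetilde{G}\rangle=\langle\lambda,\widetilde{G}\rangle\langle a_0,\widetilde{G}\rangle$, with the second factor in $\langle T_3\rangle$ by the previous step. The factor $\langle\lambda,\widetilde{G}\rangle=\{\lambda,1-\lambda\widetilde{G}\}$ is where I expect the main obstacle to lie, and it dissolves by combining two torsion facts: Steinberg bilinearity yields $\{\lambda,1-\lambda\widetilde{G}\}^{p-1}=\{\lambda^{p-1},1-\lambda\widetilde{G}\}=1$, while Lemma \ref{generators} guarantees $K_2(\mathbb{F}_p[G])$ is killed by $p$; since $\gcd(p,p-1)=1$, the element must be trivial. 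This closes the inclusion $\operatorname{im}(\iota)\subseteq\langle T_3\rangle$ and completes the proof.
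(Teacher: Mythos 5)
Your proof is correct, but it takes a genuinely different route from the paper's. The paper imports Lemma 4.1 of the cited work of Chen--Gao--Tang, which asserts outright that $K_2(\mathbb{F}_p[G],(\widetilde{G}))$ is elementary abelian of rank $n$ generated by the symbols $\langle \sigma_i,\widetilde{G}\rangle$; it then rewrites these as $\langle x_i,\prod_j x_j^{p-1}\rangle$, i.e.\ as the elements of $T_3$, notes that $T_3\subseteq T\setminus(T_1\cup T_2)$ is part of the basis from Lemma \ref{generators} so that the relative group maps isomorphically onto a direct summand, and reads off the quotient from the split exact sequence. You instead prove the key containment $\operatorname{im}\bigl(K_2(\mathbb{F}_p[G],(\widetilde{G}))\to K_2(\mathbb{F}_p[G])\bigr)\subseteq\langle T_3\rangle$ by hand, via Dennis--Stein manipulations exploiting $\widetilde{G}^2=0$, $x_i\widetilde{G}=0$, $r\widetilde{G}=\epsilon(r)\widetilde{G}$, and the prime-to-$p$ torsion trick for $\langle\lambda,\widetilde{G}\rangle$; combined with the trivial reverse containment and the linear independence of $T_3$ from Lemma \ref{generators}, this pins the kernel down exactly. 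What your version buys is self-containedness: it effectively reproves the generation half of the cited external lemma (that the relative group is carried by the $n$ symbols $\langle x_i,\widetilde{G}\rangle$), at the cost of a longer symbol computation; both arguments still lean on Lemma \ref{generators} for the exact rank count. Two small remarks: your appeal to ``relative $K_2$ is generated by Dennis--Stein symbols with one slot in the ideal'' is exactly the convention the paper adopts in its Notations (legitimate here since $(\widetilde{G})$ is a nilpotent ideal of the local ring $\mathbb{F}_p[G]$), and the digression about the boundary map into $K_1(\mathbb{F}_p[G],(\widetilde{G}))$ is superfluous, since exactness at $K_2(\mathbb{F}_p[G])$ already identifies the kernel with $\operatorname{im}(\iota)$.
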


\begin{proof}
By \cite[Lemma 4.1]{ChenHong2014}, $K_2(\mathbb{F}_p[G], (\widetilde{G}))$ is an elementary abelian $p$-group of rank $n$ with generators $\langle \sigma_i, \widetilde{G}\rangle$ for $1\leq i\leq n$. Note that $(\sigma_i-1)\widetilde{G}=0$, and
$$\widetilde{G}=\prod_{j=1}^{n}\left(\sum_{i=0}^{p-1}(\sigma_j)^i\right)\equiv \prod_{j=1}^{n}(\sigma_j-1)^{p-1} \pmod{p\mathbb{Z}[G]}. $$
It follows that
$$\langle \sigma_i, \widetilde{G}\rangle = \langle \sigma_i-1, \widetilde{G}\rangle \langle 1, \widetilde{G}\rangle =\langle \sigma_i-1, \widetilde{G}\rangle = \langle \sigma_i-1, \prod_{j=1}^{n}(\sigma_j-1)^{p-1} \rangle =  \langle x_i, \prod_{j=1}^{n} x_j^{p-1} \rangle.$$ 

As the set $\{\langle x_i, \prod_{j=1}^{n} x_j^{p-1} \rangle \mid 1 \leq i \leq n\}$ is contained in a basis for $K_2(\mathbb{F}_p[G])$ \cite{zhang2019some} and each element has order $p$, it follows that $K_2(\mathbb{F}_p[G], (\widetilde{G}))$ is a direct summand of $K_2(\mathbb{F}_p[G])$. Consequently, the following short exact sequence splits:
\begin{equation}\label{splits1}
1 \rightarrow K_2(\mathbb{F}_p[G], (\widetilde{G})) \rightarrow K_2(\mathbb{F}_p[G]) \rightarrow K_2(\mathbb{F}_p[G]/(\widetilde{G})) \rightarrow 1.
\end{equation}
The result then follows from Lemma \ref{generators}.
\end{proof}

\begin{lemma} \label{result2}
Let $n\geq 2$. Then $K_2(\mathbb{Z}[G]/I)$ is an elementary abelian $p$-group of rank $(n-1)p^n+1+\tau(p)$, and the set $T\backslash T_1$ together with $\langle -p, -p \rangle$ (if $p$=2) forms a basis for $K_2(\mathbb{Z}[G]/I)$.
\end{lemma}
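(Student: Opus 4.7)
The plan is to bound $K_2(\mathbb{Z}[G]/I)$ above and below and match the two bounds, mirroring the strategy of Theorem \ref{nontrivial2}. For the upper bound, note that $p^n=|G|\cdot e_{\{1\}}\in I$, so $(p^n)\subseteq I$, and Lemma \ref{K2-surjction} yields a surjection $K_2(\mathbb{Z}[G]/(p^n))\twoheadrightarrow K_2(\mathbb{Z}[G]/I)$. Combined with Theorem \ref{nontrivial2}, this already shows that $K_2(\mathbb{Z}[G]/I)$ is an elementary abelian $p$-group of $p$-rank at most $(n-1)p^n+1+\tau(p)$, generated by the images of $T\setminus T_1$ together with $\langle -p,-p\rangle$ (if $p=2$).

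For the matching lower bound, the plan is to exhibit enough detecting ring homomorphisms. Since $I\subseteq p\mathbb{Z}[G]$, there is a surjection $\mathbb{Z}[G]/I\twoheadrightarrow \mathbb{F}_p[G]$ and hence (by Lemma \ref{K2-surjction}) a surjection on $K_2$; Lemma \ref{generators} then implies that the $(n-1)(p^n-1)$ images of $T\setminus(T_1\cup T_2)$ are already linearly independent in $K_2(\mathbb{Z}[G]/I)$. To separate the remaining $n+\tau(p)$ generators from these and from each other, for each $1\le i\le n$ I would construct the ring map $\pi_i:\mathbb{Z}[G]\to\mathbb{Z}[G_i]$ with $G_i=\langle\sigma_i\rangle$ sending $\sigma_j\mapsto 1$ for $j\ne i$, check that $\pi_i(I)\subseteq p^n\Gamma_i$ (where $\Gamma_i$ is the maximal $\mathbb{Z}$-order of $\mathbb{Z}[G_i]$), and pass to $\overline{\pi_i}:\mathbb{Z}[G]/I\to\mathbb{Z}[G_i]/p^n\Gamma_i$. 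Under $K_2(\overline{\pi_i})$, every element of $T\setminus(T_1\cup T_2)$ dies (such a symbol necessarily contains an $x_j$ with $j\ne i$ in one of its slots, which maps to $0$), every $\langle x_j,x_j^{p-1}\rangle$ with $j\ne i$ dies, but $\langle x_i,x_i^{p-1}\rangle$ hits the nontrivial generator $\langle\sigma_i-1,(\sigma_i-1)^{p-1}\rangle$ from Lemma \ref{C_p sec}. When $p=2$, $\langle -p,-p\rangle$ maps to $\langle -2,-2\rangle=\langle 2,2\rangle$ (since $-2\equiv 2\pmod 4$ and the rank-$2$ group $K_2(\mathbb{Z}[C_2]/2^k\Gamma_i)$ is stable for $k\ge 2$), the second independent basis element. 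Hence the assembled map $K_2(\mathbb{Z}[G]/I)\to K_2(\mathbb{F}_p[G])\oplus\prod_{i=1}^{n}K_2(\mathbb{Z}[G_i]/p^n\Gamma_i)$ sends the claimed basis to a linearly independent tuple, matching the upper bound.

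The main obstacle is the well-definedness of each $\overline{\pi_i}$, i.e.\ verifying $\pi_i(I)\subseteq p^n\Gamma_i$. The generators of $I$ are $p^n e_H = p^{n-s}\widetilde{H}$ for proper subgroups $H\le G$ of order $p^s$; depending on whether $H\subseteq\ker\pi_i$ or not, their images are $p^n$ or $p^{n-1}\widetilde{G_i}$ respectively, and both lie in $p^n\Gamma_i$ because $e_{G_i}=\widetilde{G_i}/p\in\Gamma_i$. A subsidiary but essential point is that every single-variable symbol $\langle x_i,x_i^{\lambda_i}\rangle$ already lies in $T_1\cup T_2$ (it is in $T_1$ for $1\le\lambda_i\le p-2$ and in $T_2$ for $\lambda_i=p-1$), which is precisely what guarantees that each $\overline{\pi_i}$ annihilates the entire set $T\setminus(T_1\cup T_2)$ and leaves only the single $T_2$-symbol $\langle x_i,x_i^{p-1}\rangle$ visible in the $i$-th factor.
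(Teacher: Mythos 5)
Your proof is correct and follows essentially the same strategy as the paper's: an upper bound via a surjection $K_2(\mathbb{Z}[G]/(p^n))\twoheadrightarrow K_2(\mathbb{Z}[G]/I)$ (the paper uses $p^{n+1}\mathbb{Z}[G]\subseteq pJ\subseteq I$ instead of $(p^n)\subseteq I$) together with Theorem \ref{nontrivial2}, and a matching lower bound obtained by detecting the kernel of $K_2(\mathbb{Z}[G]/I)\to K_2(\mathbb{F}_p[G])$ through the rank-one projections onto $K_2(\mathbb{Z}[G_i]/p^n\Gamma_i)$ and Lemma \ref{C_p sec}. Your write-up is more explicit than the paper's --- you verify $\pi_i(I)\subseteq p^n\Gamma_i$ directly and carry out the linear-independence bookkeeping, where the paper routes the projections through $\mathbb{Z}[G]/|G|\Gamma$ by citing \cite{ChenHong2014} and leaves those checks implicit --- but the underlying argument is the same.
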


\begin{proof}
As described in \cite[p.~1587]{ChenHong2014}, the canonical ring homomorphism $f: \mathbb{Q}[G]\rightarrow \mathbb{Q}[G_i]$ maps $|G|\Gamma$ to $|G|\Gamma_{i}$, inducing a surjective map of rings $\mathbb{Z}[G]/|G|\Gamma \rightarrow \mathbb{Z}[G_i]/|G|\Gamma_{i}$. Thus, the map $g: K_2(\mathbb{Z}[G]/I)\rightarrow K_2(\mathbb{F}_p[G])$ is surjective by Lemma \ref{K2-surjection}. Clearly, $\ker(g)$ contains $\langle \sigma_i-1, (\sigma_i-1)^{p-1}\rangle$ for $1\leq i \leq n$ and $\langle -p, -p \rangle$ (if $p=2$). By Lemma \ref{Cp-sec}, these generators have exponent $p$. Since $p^{n+1}\mathbb{Z}[G]\subseteq pJ\subseteq I\subseteq p\mathbb{Z}[G]$, the proof of Theorem \ref{nontrivial2} implies that
$$\ker(K_2(\mathbb{Z}[G]/(p^{n+1}))\rightarrow K_2(\mathbb{F}_p[G])) \cong \ker(K_2(\mathbb{Z}[G]/I)\rightarrow K_2(\mathbb{F}_p[G])).$$
Hence, $K_2(\mathbb{Z}[G]/I)$ is isomorphic to $K_2(\mathbb{Z}[G]/(p^{n+1}))$, and the result follows.
\end{proof}

\begin{theorem}\label{main-thm}
Let $G$ be an elementary abelian $p$-group of rank $n$. Then
\begin{enumerate}[(i)]
\item For $n\geq 2$, $K_2(\mathbb{Z}[G]/p^n\Gamma)$ is an elementary abelian $p$-group of rank $(n-1)(p^n-1)+\tau(p)$. The set $T\backslash (T_1\cup T_3)$ together with $\langle -p, -p \rangle$ (if $p$=2) forms a basis for $K_2(\mathbb{Z}[G]/p^n\Gamma)$.
\item For any $k\geq n+1$, $K_2(\mathbb{Z}[G]/p^k\Gamma)$ is an elementary abelian $p$-group of rank $(n-1)p^n+1+\tau(p)$. The set $T\backslash T_1$ together with $\langle -p, -p \rangle$ (if $p$=2) forms a basis for $K_2(\mathbb{Z}[G]/p^k\Gamma)$.
\end{enumerate}
\end{theorem}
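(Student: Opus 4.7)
The proof naturally splits into the stable range $k\geq n+1$ and the boundary case $k=n$; in both my strategy is to sandwich $K_2(\mathbb{Z}[G]/p^k\Gamma)$ between groups whose Dennis--Stein presentations are already known from earlier lemmas. For the stable range I first record the chain of inclusions $p^k\mathbb{Z}[G]\subseteq p^k\Gamma\subseteq p^{n+1}\Gamma\subseteq I$, the last being $pJ\subseteq I$ from the proof of Lemma~\ref{result2}. By Lemma~\ref{K2-surjction} this produces a chain of surjections $K_2(\mathbb{Z}[G]/(p^k))\twoheadrightarrow K_2(\mathbb{Z}[G]/p^k\Gamma)\twoheadrightarrow K_2(\mathbb{Z}[G]/I)$ whose outer terms are, by Theorem~\ref{nontrivial2} and Lemma~\ref{result2}, both elementary abelian of rank $(n-1)p^n+1+\tau(p)$ with the common basis $T\setminus T_1\cup\{\langle -p,-p\rangle\text{ if }p=2\}$. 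A cardinality count then forces each arrow to be an isomorphism, and the claimed basis for $K_2(\mathbb{Z}[G]/p^k\Gamma)$ is simply the image of the known one.

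For $k=n$ I would run the Mayer--Vietoris sequence of the Cartesian square~\eqref{Cartesian}, the relevant portion of which is
\[
K_2(\mathbb{Z}[G]/I)\xrightarrow{\phi}K_2(\mathbb{Z}[G]/p^n\Gamma)\oplus K_2(\mathbb{F}_p[G])\xrightarrow{\psi}K_2(\mathbb{F}_p[G]/(\widetilde{G}))\to K_1(\mathbb{Z}[G]/I),
\]
with $\phi=(\alpha,\beta)$ the diagonal map. Since the bottom arrow of~\eqref{Cartesian} is a surjection of commutative semi-local rings, Lemma~\ref{K2-surjction} makes its $K_2$-image surjective, hence $\psi$ is surjective and the connecting map vanishes. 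Modulo injectivity of $\phi$, the resulting short exact sequence yields the order identity $|K_2(\mathbb{Z}[G]/p^n\Gamma)|\cdot|K_2(\mathbb{F}_p[G])|=|K_2(\mathbb{Z}[G]/I)|\cdot|K_2(\mathbb{F}_p[G]/(\widetilde{G}))|$; substituting the orders from Lemmas~\ref{generators},~\ref{result1},~\ref{result2} pins down the rank of $K_2(\mathbb{Z}[G]/p^n\Gamma)$ as $(n-1)(p^n-1)+\tau(p)$. Being a quotient of the elementary abelian group $K_2(\mathbb{Z}[G]/(p^n))$, it is itself elementary abelian, and the basis $T\setminus(T_1\cup T_3)\cup\{\langle -p,-p\rangle\text{ if }p=2\}$ arises as the disjoint union of the $T_2\cup\{\langle -p,-p\rangle\text{ if }p=2\}$ contributed by $\ker\beta$ and the Dennis--Stein lifts of $T\setminus(T_1\cup T_2\cup T_3)$, the basis of $K_2(\mathbb{F}_p[G]/(\widetilde{G}))$ supplied by Lemma~\ref{result1}.

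The main obstacle is thus the injectivity of $\phi$. From the proof of Lemma~\ref{result2} the kernel of $\beta:K_2(\mathbb{Z}[G]/I)\to K_2(\mathbb{F}_p[G])$ is spanned precisely by the $n+\tau(p)$ symbols $T_2\cup\{\langle -p,-p\rangle\text{ if }p=2\}$, so the task reduces to showing $\alpha:K_2(\mathbb{Z}[G]/I)\to K_2(\mathbb{Z}[G]/p^n\Gamma)$ keeps this family independent. I would verify this by post-composing $\alpha$ with the $n$ natural projections $K_2(\mathbb{Z}[G]/p^n\Gamma)\to K_2(\mathbb{Z}[G_i]/p^n\Gamma_i)$ induced by $\sigma_j\mapsto 1$ for $j\neq i$: by Lemma~\ref{C_p sec} each $\langle x_i, x_i^{p-1}\rangle$ hits a generator of the $i$-th factor, and when $p=2$ the symbol $\langle -p,-p\rangle$ hits the ``scalar'' generator in every factor, producing $n+\tau(p)$ linearly independent images in $\prod_i K_2(\mathbb{Z}[G_i]/p^n\Gamma_i)$. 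The mild subtlety in the $p=2$ case is that one must use Lemma~\ref{C_p sec} to distinguish $\langle -2,-2\rangle$ from every $\langle x_i,x_i\rangle$ within $K_2(\mathbb{Z}[C_2]/2^n\Gamma_i)$.
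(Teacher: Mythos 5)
Your stable-range argument ($k\geq n+1$) is exactly the paper's: the sandwich $p^k\mathbb{Z}[G]\subseteq p^k\Gamma\subseteq pJ\subseteq I$ together with Theorem~\ref{nontrivial2} and Lemma~\ref{result2} forces both surjections to be isomorphisms. No issues there.

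For $k=n$ there is a genuine gap. The order identity $|K_2(\mathbb{Z}[G]/p^n\Gamma)|\cdot|K_2(\mathbb{F}_p[G])|=|K_2(\mathbb{Z}[G]/I)|\cdot|K_2(\mathbb{F}_p[G]/(\widetilde{G}))|$ requires exactness of your Mayer--Vietoris sequence at the middle term $K_2(\mathbb{Z}[G]/p^n\Gamma)\oplus K_2(\mathbb{F}_p[G])$, i.e.\ $\operatorname{Im}\phi=\ker\psi$. This is \emph{not} a formal consequence of the square \eqref{Cartesian} being Cartesian with surjective maps onto the lower-right corner: $K_2$ does not satisfy excision in general, and exactness at that spot is equivalent to surjectivity of the excision map $K_2(\mathbb{Z}[G]/I,\,J/I)\to K_2(\mathbb{F}_p[G],(\widetilde{G}))$. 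That surjectivity (in fact bijectivity) is precisely the nontrivial input the paper takes from Lemma~4.2 of \cite{ChenHong2014} (the map $g_1$ in its diagram of relative sequences); the general Mayer--Vietoris statement cited in the introduction, \cite[Theorem~1.3]{alperin1985sk}, is for squares of a different shape and does not cover \eqref{Cartesian}. Without this input your argument only yields $|\ker\psi|\geq|\operatorname{Im}\phi|$, hence a one-sided bound $\operatorname{rank} K_2(\mathbb{Z}[G]/p^n\Gamma)\geq(n-1)(p^n-1)+\tau(p)$, and the surjection from $K_2(\mathbb{Z}[G]/(p^n))$ gives only the much weaker upper bound $(n-1)p^n+1+\tau(p)$, so the rank is not pinned down.

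Two smaller remarks. First, you correctly identify that injectivity of $\phi$ is not part of the Mayer--Vietoris package, and your proof of it --- reducing to $\ker\beta=\langle T_2,\langle-p,-p\rangle\rangle$ and detecting its image under the projections $K_2(\mathbb{Z}[G]/p^n\Gamma)\to K_2(\mathbb{Z}[G_i]/p^n\Gamma_i)$ via Lemma~\ref{C_p sec} --- is a legitimate alternative to the paper's derivation of this from the injectivity of $g_1$; but it supplies only half of what the excision isomorphism buys. Second, your basis bookkeeping is consistent (indeed $T\setminus(T_1\cup T_3)$ is the disjoint union of $T_2$ with $T\setminus(T_1\cup T_2\cup T_3)$), so once the middle-exactness is supplied --- e.g.\ by importing Lemma~4.2 of \cite{ChenHong2014} as the paper does --- the rest of your argument goes through.
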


\begin{proof}
\begin{enumerate}[(i)]
	\item For $n \geq 2$, the Cartesian square \eqref{Cartesian} induces a natural commutative diagram with exact rows:
\begin{equation*}
\begin{CD}
@.    K_2(\mathbb{Z}[G]/I, J/I) @>{f_1}>>  K_2(\mathbb{Z}[G]/I)@>{f_2}>>  K_2(\mathbb{Z}[G]/J) @>>> 1\\
@.  @V{g_1}VV  @V{g_2}VV @V{g_3}VV @. \\
1@>>>  K_2(\mathbb{F}_p[G], (\widetilde{G})) @>>>  K_2(\mathbb{F}_p[G]) @>>>  K_2(\mathbb{F}_p[G]/(\widetilde{G})) @>>> 1
\end{CD}
\end{equation*}
where the bottom row is given by \eqref{splits1}. By Lemma \ref{K2-surjection}, $f_2$, $g_2$, and $g_3$ are surjective. Lemma 4.2 in \cite{ChenHong2014} shows that $g_1$ is an isomorphism, which implies $K_2(\mathbb{Z}[G]/I, J/I)$ is an elementary abelian $p$-group and $f_1$ is injective.
A corollary of Lemma \ref{K2-surjection} and Theorem \ref{nontrivial2} shows that both $K_2(\mathbb{Z}[G]/I)$ and $K_2(\mathbb{Z}[G]/J)$ are elementary abelian $p$-groups. Consequently, the top exact sequence splits. By the Snake Lemma, $\ker(g_2)\cong \ker(g_3)$. Since $g_2$ and $g_3$ are split surjections, we have
\begin{equation*}
  K_2(\mathbb{Z}[G]/J)\cong K_2(\mathbb{F}_p[G]/(\widetilde{G})) \oplus \ker\big(K_2(\mathbb{Z}[G]/I)\rightarrow K_2(\mathbb{F}_p[G])\big).
\end{equation*}
Based on the $p$-ranks of $K_2(\mathbb{F}_p[G])$, $K_2(\mathbb{F}_p[G]/(\widetilde{G}))$, and $K_2(\mathbb{Z}[G]/I)$ provided in Lemmas \ref{generators}, \ref{result1}, and \ref{result2} respectively, $K_2(\mathbb{Z}[G]/p^n\Gamma)$ is an elementary abelian $p$-group of rank $(n-1)(p^n-1)+\tau(p)$. 
    
    \item For the case $n=1$ and $k\geq 2$, the result is given by Lemma \ref{Cp-sec}. 
For $n \geq 2$ and any $k\geq n+1$, the relation $pJ\subseteq I$ implies the existence of a surjective map from $K_2(\mathbb{Z}[G]/p^k\Gamma)$ onto $K_2(\mathbb{Z}[G]/I)$. Moreover, since $K_2(\mathbb{Z}[G]/(p^k))$ maps surjectively onto $K_2(\mathbb{Z}[G]/p^k\Gamma)$, it suffices to show that $K_2(\mathbb{Z}[G]/(p^k))$ and $K_2(\mathbb{Z}[G]/I)$ have the same $p$-rank. This follows directly from Theorem \ref{nontrivial2} and Lemma \ref{result2}.
\end{enumerate} 
\end{proof}

\begin{remark}
From the proof, it can be concluded that for any $k\geq n+1$, $K_2(\mathbb{Z}[G]/(p^k)) \cong K_2(\mathbb{Z}[G]/p^k\Gamma)$.
\end{remark}

\section{Applications}

This section provides a definitive description of $SK_1(\mathbb{Z}[G], p^k\mathbb{Z}[G])$, where $p$ is an odd prime and $G$ is an elementary abelian $p$-group of rank $n$ (For $p$=2, the result is trivial, see \cite[Theorem 1.10]{alperin1987sk}). Subsequently, we address the calculation of $SK_1(\mathbb{Z}[G], p\mathbb{Z}[G])$, a problem that has remained open since it was mentioned in \cite{dennis1976lower}. To resolve these computational issues, we first discuss several necessary preliminaries.

Let $G$ be a finite abelian $p$-group, and let $\Gamma$ be the maximal $\mathbb{Z}$-order in $\mathbb{Q}[G]$. Consider an irreducible character $\chi: G\rightarrow \mathbb{C}^{*}$ of $G$. The image $\chi(G)$ is a finite cyclic group of roots of unity whose order divides $|G|$, and $\chi$ induces a surjective homomorphism from $\mathbb{Q}[G]$ to $\mathbb{Q}[\chi]$, where $\mathbb{Q}[\chi]$ denotes the cyclotomic extension of $\mathbb{Q}$ generated by $\chi(G)$. Recall that a cluster of characters $S$ for $G$ is a set of irreducible characters containing exactly one character dividing each irreducible $\mathbb{Q}[G]$-module (see \cite[Definition 2.1]{alperin1985sk}). There is a natural isomorphism $\mathbb{Q}[G]\cong \prod_{\chi \in S}\mathbb{Q}[\chi]$. Furthermore, by \cite[Proposition 2.2]{alperin1985sk}, we have $\Gamma\cong \prod_{\chi \in S}\mathbb{Z}[\chi]$ and $|G|\Gamma \subseteq \mathbb{Z}[G] \subseteq \Gamma$. An imaginary cluster of characters for $G$ is defined as $S_0= \{\chi \in S \mid \mathbb{Z}[\chi]\neq \mathbb{Z} \}$. Clearly,
$$S_0 =\left\{
\begin{array}{ccl}
 \{\chi \in S\mid |\mbox{Im}(\chi)|>2\}    &      & {\mbox{if $p=2$},} \\
 S\backslash \{\mbox{trivial character}\}  &      &  {\mbox{if $p\neq2$}.}\\
\end{array} \right.$$

\begin{lemma}\cite[Lemma 13.37]{magurn2002algebraic} \label{sk1-sur}
If $R$ is a commutative ring with ideals $J' \subseteq J$ of finite index, then the inclusion on $SL$ induces a surjective homomorphism $SK_1(R,J' ) \rightarrow SK_1(R,J)$.
\end{lemma}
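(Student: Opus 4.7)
The plan is to argue directly at the level of matrices, using the description $SK_1(R,K)=SL(R,K)/E(R,K)$, where $SL(R,K)=\ker(SL(R)\to SL(R/K))$ and $E(R,K)$ denotes the normal closure in $E(R)$ of elementary matrices with off-diagonal entry in $K$. Surjectivity amounts to showing that every class of $SK_1(R,J)$ admits a representative already lying in $SL(R,J')$. To produce such a representative I would take $\alpha\in SL(R,J)$ and reduce modulo $J'$ to obtain $\bar\alpha\in SL(R/J',J/J')$. If I can show that $\bar\alpha$ lies in $E(R/J',J/J')$, i.e.\ is a product of elementary matrices $\prod_k e_{i_k j_k}(\bar r_k)$ with $\bar r_k\in J/J'$, then lifting each $\bar r_k$ to $r_k\in J$ via the surjection $J\twoheadrightarrow J/J'$ assembles an element $\epsilon\in E(R,J)$ satisfying $\bar\epsilon=\bar\alpha$. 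Setting $\beta:=\alpha\epsilon^{-1}$ then gives $\beta\in SL(R,J')$ and $[\alpha]=[\beta]$ in $SK_1(R,J)$, so $[\beta]\in SK_1(R,J')$ maps onto $[\alpha]$.

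The auxiliary input required by this strategy is $SK_1(R/J',J/J')=1$. The finite-index hypothesis makes $R/J'$ and $R/J$ both finite, hence Artinian and semi-local, commutative rings, so $SK_1(R/J')=SK_1(R/J)=1$ by Bass's theorem. Feeding this into the $K$-theory long exact sequence $K_2(R/J')\to K_2(R/J)\to SK_1(R/J',J/J')\to SK_1(R/J')=1$ of the pair $(R/J',\,J/J')$ reduces the claim to surjectivity of $K_2(R/J')\to K_2(R/J)$, which is a direct application of Lemma \ref{K2-surjction} to the surjection of commutative semi-local rings $R/J'\twoheadrightarrow R/J$. This forces $SK_1(R/J',J/J')=1$, and the lifting argument of the first paragraph then completes the proof.

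The main obstacle, and the reason the finite-index hypothesis is essential, is the need for both quotients to be semi-local so that the absolute $SK_1$ vanishes and Lemma \ref{K2-surjction} applies. Without finite index on $J'$, the ring $R/J'$ could have nontrivial $SK_1$, and the strategy of reducing modulo $J'$ and then lifting elementary factors would break down. Once semi-locality is secured, the rest is a routine diagram chase combined with the standard lift of elementary matrices through the surjection $J\twoheadrightarrow J/J'$; no genuinely new idea is required beyond orchestrating these two ingredients.
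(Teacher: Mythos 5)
The paper does not prove this lemma---it is quoted verbatim from Magurn \cite[Lemma 13.37]{magurn2002algebraic}---so there is no internal proof to compare against; judged on its own, your argument is correct and is essentially the standard one. The reduction of surjectivity to the vanishing of $SK_1(R/J',J/J')$, via lifting a factorization of $\bar\alpha$ through $E(R)\twoheadrightarrow E(R/J')$ and $J\twoheadrightarrow J/J'$ and replacing $\alpha$ by $\alpha\epsilon^{-1}\in SL(R,J')$, is exactly how this is done; and your derivation of $SK_1(R/J',J/J')=1$ from finiteness (hence semi-locality) of $R/J'$ and $R/J$, Bass's vanishing of absolute $SK_1$, the relative exact sequence for the pair $(R/J',J/J')$, and the surjectivity of $K_2(R/J')\to K_2(R/J)$ supplied by Lemma \ref{K2-surjction} is sound. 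Two small points. First, one could shortcut the $K_2$ step by citing Bass's stronger theorem that $SK_1(A,I)=1$ for \emph{every} ideal $I$ of a commutative semi-local ring $A$, but your route has the merit of using only tools already present in the paper. Second, a minor imprecision: $E(R/J',J/J')$ is the \emph{normal closure} in $E(R/J')$ of the elementary matrices with off-diagonal entry in $J/J'$, so $\bar\alpha$ is a priori a product of conjugates $g\,e_{ij}(\bar r)\,g^{-1}$ with $g\in E(R/J')$ and $\bar r\in J/J'$, not merely of elementary matrices $e_{ij}(\bar r)$; this does not affect the argument, since $g$ lifts to $E(R)$ and $\bar r$ lifts to $J$, so each conjugate lifts into $E(R,J)$, but the phrasing should be adjusted accordingly.
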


\begin{proposition} \label{sk1-inverses}
Let $G$ be a finite abelian $p$-group and $S_0$ be an imaginary cluster of characters for $G$. Then
$$\varprojlim_{k} SK_1(\mathbb{Z}[G],(p^k)) \cong \prod_{\chi\in S_0} \mbox{Im}(\chi). $$
\end{proposition}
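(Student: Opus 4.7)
The plan is to reduce to an inverse limit over the maximal order $\Gamma$ via excision, decompose by characters, and analyze each cyclotomic factor using standard facts about $K$-theory of Dedekind domains and the Dennis-type computations already used in Lemma~\ref{C_p fir}.

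First, since $p^n\Gamma \subseteq \mathbb{Z}[G]$ (with $|G|=p^n$), we have $p^{k+n}\Gamma \subseteq p^k\mathbb{Z}[G] \subseteq p^k\Gamma$, so the two ideal chains are cofinal. Combined with Lemma~\ref{sk1-sur} (which yields surjective transition maps, hence the Mittag--Leffler condition), this gives $\lim_k SK_1(\mathbb{Z}[G],(p^k)) \cong \lim_k SK_1(\mathbb{Z}[G], p^k\Gamma)$. For $k \geq n$ the Cartesian square underlying \eqref{new-sequence} is valid, so excision yields $K_1(\mathbb{Z}[G], p^k\Gamma) \cong K_1(\Gamma, p^k\Gamma)$; the compatibility of the determinants under the injection $\mathbb{Z}[G]^* \hookrightarrow \Gamma^*$ restricts this to $SK_1(\mathbb{Z}[G], p^k\Gamma) \cong SK_1(\Gamma, p^k\Gamma)$.

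Second, $\Gamma \cong \prod_{\chi \in S}\mathbb{Z}[\chi]$ gives $SK_1(\Gamma, p^k\Gamma) \cong \prod_\chi SK_1(\mathbb{Z}[\chi], p^k\mathbb{Z}[\chi])$. Each $\mathbb{Z}[\chi]$ is a Dedekind domain (cyclotomic integer ring) with $SK_1(\mathbb{Z}[\chi]) = 0$, and each $\mathbb{Z}[\chi]/p^k$ is a finite commutative ring with $SK_1 = 0$. The long exact $K$-theory sequence of the pair $(\mathbb{Z}[\chi], p^k\mathbb{Z}[\chi])$ therefore collapses to $SK_1(\mathbb{Z}[\chi], p^k\mathbb{Z}[\chi]) \cong \mathrm{coker}(K_2(\mathbb{Z}[\chi]) \to K_2(\mathbb{Z}[\chi]/p^k))$, and passing to the inverse limit yields $\lim_k SK_1(\mathbb{Z}[G],(p^k)) \cong \prod_\chi \mathrm{coker}(K_2(\mathbb{Z}[\chi]) \to K_2^c(\widehat{\mathbb{Z}}_p[\chi]))$.

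Third, I would evaluate the factors. For $\chi \notin S_0$, $\mathbb{Z}[\chi] = \mathbb{Z}$, and the limiting cokernel vanishes ($K_2^c(\widehat{\mathbb{Z}}_p) = 0$ for odd $p$, while for $p = 2$ the symbol $\{-1,-1\}$ generates both $K_2^c(\widehat{\mathbb{Z}}_2)$ and $K_2(\mathbb{Z})$). For $\chi \in S_0$ of order $p^r$, $\mathbb{Z}[\chi] = \mathbb{Z}[\zeta_{p^r}]$; adapting the argument of Lemma~\ref{C_p fir} via Dennis's Theorem 4.3 and Corollary 3.3 of \cite{dennis1975k} at the totally ramified prime $\pi = \zeta_{p^r}-1$, I would show $\lim_k K_2(\mathbb{Z}[\zeta_{p^r}]/p^k)$ is cyclic of order $p^r$, naturally identified with $\mathrm{Im}(\chi) = \mu_{p^r}$ via a generating Dennis-Stein symbol, and that the image of $K_2(\mathbb{Z}[\zeta_{p^r}])$ in this limit is zero.

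The main obstacle is this final triviality of the image, especially for $r \geq 2$: one must generalize the cyclic-order computation of Lemma~\ref{C_p fir} to higher cyclotomic rings (using the ramification index $\phi(p^r) = p^{r-1}(p-1)$) and then verify that the generating Dennis-Stein symbol of the limit is not in the image of $K_2(\mathbb{Z}[\zeta_{p^r}])$. The heuristic reason is that the relevant generator involves inputs (such as $1 + \pi^{\phi(p^r)-1}$) which are not global units, so the symbol cannot be lifted from any global Steinberg symbol; however, a rigorous argument must also exclude lifting via more subtle combinations of Steinberg symbols of global cyclotomic units, which is where the detailed ramification-theoretic analysis at the unique prime above $p$ is required.
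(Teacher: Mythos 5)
Your reduction steps agree with the paper's: you use the cofinality of the chains $p^k\mathbb{Z}[G]$ and $p^k\Gamma$ together with the surjectivity of the transition maps (Lemma \ref{sk1-sur}) to pass to $\lim_k SK_1(\mathbb{Z}[G],p^k\Gamma)$, then excision to replace this by $\lim_k SK_1(\Gamma,p^k\Gamma)$ (the paper cites \cite[Theorem 1.3(f)]{alperin1985sk} for exactly this), and then the decomposition $\Gamma\cong\prod_{\chi\in S}\mathbb{Z}[\chi]$. Up to that point your argument is sound and is essentially the paper's.

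The genuine gap is in your final step. The paper finishes by quoting Corollary 4.3 of Bass--Milnor--Serre \cite{bass1967solution}, which computes $SK_1(\mathbb{Z}[\chi],\mathfrak{q})$ for an ideal $\mathfrak{q}$ in the ring of integers of a number field as an explicit cyclic group of roots of unity (trivial when the field has a real embedding, and converging to $\mu_{p^\infty}(\mathbb{Q}[\chi])=\mathrm{Im}(\chi)$ as $\mathfrak{q}=p^k\mathbb{Z}[\chi]$ shrinks). You instead propose to identify $SK_1(\mathbb{Z}[\chi],p^k\mathbb{Z}[\chi])$ with $\mathrm{coker}\bigl(K_2(\mathbb{Z}[\chi])\to K_2(\mathbb{Z}[\chi]/p^k)\bigr)$ --- which is fine --- and then to evaluate this cokernel directly. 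But the two assertions you need, namely that $\lim_k K_2(\mathbb{Z}[\zeta_{p^r}]/(p^k))$ is cyclic of order $p^r$ \emph{and} that the image of $K_2(\mathbb{Z}[\zeta_{p^r}])$ in it vanishes, are precisely the hard content of the congruence subgroup theorem for these rings (equivalently, of Moore's computation of the metaplectic kernel). Your heuristic --- that the generating Dennis--Stein symbol involves non-units and so ``cannot be lifted from any global Steinberg symbol'' --- is not an argument: $K_2(\mathbb{Z}[\zeta_{p^r}])$ is not generated by Steinberg symbols of units in any obvious way, its image in a finite quotient can a priori hit symbols with non-unit entries (compare $\{-1,-1\}\in K_2(\mathbb{Z})$ mapping onto all of $K_2(\mathbb{Z}/4)$, which is your own observation for the case $\mathbb{Z}[\chi]=\mathbb{Z}$, $p=2$), and Lemma \ref{C_p fir} itself establishes nontriviality of $\langle\pi,\pi^{p-1}\rangle$ only by citing Dennis--Stein, not by computing the image of the global $K_2$. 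You explicitly flag this as the ``main obstacle,'' and it is: without either proving this vanishing or invoking \cite{bass1967solution}, the proof is incomplete. The fix is simply to cite the Bass--Milnor--Serre result, as the paper does, rather than attempt to re-derive it through ramification-theoretic symbol manipulations.
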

\begin{proof}
 Set $|G|=p^{k_0}$. Then for any $k\geq k_0$, the inclusion $p^{k}\Gamma \subseteq p^{k-k_0} \mathbb{Z}[G] \subseteq p^{k-k_0} \Gamma$ holds. Therefore,
\begin{align}
\varprojlim_{k} SK_1(\mathbb{Z}[G],(p^k)) 
&= \varprojlim_{k\geq k_0} SK_1(\mathbb{Z}[G],(p^k)) \nonumber \\
&\cong \varprojlim_{k\geq k_0} SK_1(\mathbb{Z}[G],p^k\Gamma) &&\mbox{ (by Lemma \ref{sk1-sur})} \nonumber \\
   &\cong \varprojlim_{k\geq k_0} SK_1(\Gamma,p^k\Gamma)
    &&\mbox{(by \cite[Theorem 1.3(f)]{alperin1985sk})} \nonumber \\
    & \cong \varprojlim_{k\geq k_0} \prod_{\chi \in S} SK_1(\mathbb{Z}[\chi],p^k\mathbb{Z}[\chi])     &&\mbox{ ($SK_1$ commutes with products)}   \nonumber \\
    & \cong \prod_{\chi\in S_0} \mbox{Im}(\chi). \nonumber     &&\mbox{ (by Corollary 4.3 in \cite{bass1967solution})}
\end{align}
\end{proof}

\begin{theorem}\label{relative-sk1}
Let $p$ be an odd prime, and let $G$ be an elementary abelian $p$-group of rank $n$. Then:
\begin{enumerate}[(i)]
	\item For any $k\geq 2$, $SK_1(\mathbb{Z}[G], p^k\mathbb{Z}[G])$ is an elementary abelian $p$-group of rank $\dfrac{p^n-1}{p-1}$.
	\item $SK_1(\mathbb{Z}[G], p\mathbb{Z}[G])$ is an elementary abelian $p$-group of rank $\dfrac{p^n-1}{p-1}-n$.
\end{enumerate}
\end{theorem}
\begin{proof}
\begin{enumerate}[(i)]
	\item Since the $SK_1$ of finite rings is trivial \cite[p.~469, Proposition 3.10]{bass1968algebraic}, we have the following natural commutative diagram with exact rows:
\begin{equation*}
 \begin{CD}
K_2(\mathbb{Z}[G]) @>>> K_2(\mathbb{Z}[G]/(p^{k+1}))@>>> SK_1(\mathbb{Z}[G], (p^{k+1})) @>>> SK_1(\mathbb{Z}[G])   @>>> 1 \\
@| @VV{\cong}V @VVV  @|   @|  \\
K_2(\mathbb{Z}[G]) @>>> K_2(\mathbb{Z}[G]/(p^{k}))@>>> SK_1(\mathbb{Z}[G], (p^{k})) @>>> SK_1(\mathbb{Z}[G])   @>>> 1 \\
\end{CD}
\end{equation*}
By Theorem \ref{nontrivial2}, $K_2(\mathbb{Z}[G]/(p^{k}))$ is stable for $k\geq 2$. It then follows from the Snake Lemma that $SK_1(\mathbb{Z}[G], (p^{k}))$ is also stable for $k\geq 2$, and thus coincides with its inverse limit. Hence, by Proposition \ref{sk1-inverses}, $SK_1(\mathbb{Z}[G], (p^k))$ is an elementary abelian $p$-group of rank $\dfrac{p^n-1}{p-1}$. 

	\item Let $G=\prod_{i=1}^{n}G_i$, where $G_i$ denotes a cyclic group of order $p$, and let $\Gamma_i$ be the maximal $\mathbb{Z}$-order of $\mathbb{Z}[G_i]$. Following the proof of Proposition \ref{sk1-inverses}, we have
$$SK_1(\mathbb{Z}[G_i],p^k\Gamma_i) \cong SK_1(\Gamma_i, p^k\Gamma_i)\cong SK_1(\mathbb{Z}, p^k\mathbb{Z}) \oplus SK_1(\mathbb{Z}[\zeta_p], p^k\mathbb{Z}[\zeta_p]) = C_p.$$
Consider the natural commutative diagram:
\begin{equation}\label{connect}
 \begin{CD}
K_2(\mathbb{Z}[G]/(p^k)) @>{\partial}>> SK_1(\mathbb{Z}[G], (p^k))  \\
@V{p_{i*}}VV @V{p_{i*}}VV  \\
K_2(\mathbb{Z}[G_i]/(p^k)) @>{\partial}>>  SK_1(\mathbb{Z}[G_i], (p^k))
\end{CD}
\end{equation}
where $\partial$ denotes the connecting homomorphism and $p_{i*}$ is the map induced by the projection $p_{i}: G \rightarrow G_i$.
Let $L=\ker(K_2(\mathbb{Z}[G]/(p^k))\rightarrow K_2(\mathbb{F}_p[G]))$. From the proof of Theorem \ref{nontrivial2}, $L$ is an elementary abelian $p$-group of rank $n$, and $L\cong \prod_{i=1}^{n} K_2(\mathbb{Z}[G_i]/(p^k))$.
Since $SK_1(\mathbb{Z}[G_i])$ is trivial, the algebraic $K$-theory sequence for $(\mathbb{Z}[G_i], (p^k))$ shows that $K_2(\mathbb{Z}[G_i]/(p^k))$ maps onto $SK_1(\mathbb{Z}[G_i],(p^k))$. By Lemma \ref{sk1-sur}, $SK_1(\mathbb{Z}[G_i],(p^k))$ maps onto $SK_1(\mathbb{Z}[G_i], p^k\Gamma_i)$, which is a cyclic group of order $p$. Moreover, by Lemma \ref{Cp-fir}, $K_2(\mathbb{Z}[G_i]/(p^k))$ is cyclic of order $p$. Thus, $K_2(\mathbb{Z}[G_i]/(p^k)) \cong SK_1(\mathbb{Z}[G_i],(p^k)) \cong C_p$.
As \eqref{connect} commutes and $SK_1(\mathbb{Z}[G], (p^k))$ has exponent $p$, $SK_1(\mathbb{Z}[G_i],(p^k))$ is isomorphic to a direct summand of $SK_1(\mathbb{Z}[G], (p^k))$. Let $L_1$ be the image of $L$ in $SK_1(\mathbb{Z}[G], (p^k))$. Then $L_1$ is an elementary abelian $p$-group of rank $n$, and $L_1 \cong \prod_{i=1}^{n} SK_1(\mathbb{Z}[G_i], (p^k))$.
For any $k\geq 2$, we obtain another commutative diagram with exact rows:
\begin{equation*}
 \begin{CD}
K_2(\mathbb{Z}[G]) @>>> K_2(\mathbb{Z}[G]/(p^k))/L@>>> SK_1(\mathbb{Z}[G], (p^k))/L_1 @>>> SK_1(\mathbb{Z}[G])   @>>> 1 \\
@| @VV{\cong}V @VVV  @|   @|  \\
K_2(\mathbb{Z}[G]) @>>> K_2(\mathbb{Z}[G]/(p))@>>> SK_1(\mathbb{Z}[G], (p)) @>>> SK_1(\mathbb{Z}[G])   @>>> 1 \\
\end{CD}
\end{equation*}
The second vertical arrow is an isomorphism by definition. By the Snake Lemma, the third vertical arrow is also an isomorphism. Hence, $SK_1(\mathbb{Z}[G], (p))$ is an elementary abelian $p$-group of rank $\dfrac{p^n-1}{p-1}-n$.
\end{enumerate}
\end{proof}

\begin{remark}
Let $p$ be an odd prime. The proof of Theorem \ref{relative-sk1} implies that for $k\geq 2$, $SK_1(\mathbb{Z}[G], (p^k)) \cong SK_1(\Gamma, (p^k))$. Furthermore, according to Theorems A.2 and A.3 in \cite{alperin1985sk}, $SK_1(\Gamma, (p^k)) \cong K_2(\Gamma/(p^k))$. Thus, by \cite[Proposition 2.5]{alperin1987sk}, the image of $K_2(\mathbb{Z}[G]/(p^k))$ in $SK_1(\mathbb{Z}[G], (p^k))$ is an elementary abelian $p$-group of rank $\dbinom{p+n-1}{p}$. Alternatively, for $k=1$, the image has $p$-rank $\dbinom{p+n-1}{p}-n$. Consequently, one can recover the classical result that $SK_1(\mathbb{Z}[G])$ is an elementary abelian $p$-group of rank $\dfrac{p^n-1}{p-1}-\dbinom{p+n-1}{p}$. 
\end{remark}

\section*{Acknowledgement}
The author sincerely appreciate the constructive feedback and insightful comments provided by the editors and anonymous reviewers.

\section*{Disclosure statement}
The author declares that there are no competing interests of a financial or personal nature.

\bibliography{mybibfile}

\end{document}